\def\ga{\alpha}
\def\gc{\gamma}
\def\th{\theta}
\def\gw{\omega}
\def\bgw{\boldsymbol{\gw}}
\def\gC{\Gamma}
\def\gW{\Omega}
\def\br{{\bf r}}
\def\bs{{\bf s}}
\def\bv{{\bf v}}
\def\bx{{\bf x}}
\def\by{{\bf y}}
\def\b0{{\bf0}}
\def\x{\times}
\def\<{\langle}
\def\>{\rangle}
\def\bbR{{\mathbb R}}
\def\Lse{\mathfrak{se}}
\def\Lso{\mathfrak{so}}
\def\Lt{\mathfrak{t}}
\def\({\left(}
\def\){\right)}
\def \tete{t\^{e}te-a-t\^{e}te}
\newtheorem{theorem}{Theorem}[section]
\theoremstyle{definition}
\newtheorem{definition}[theorem]{Definition}
\begin{document}

\title{Polynomial Invariants and SAGBI Bases for Multi-screws}

\author{Deborah Crook \and Peter Donelan}
\address{School of Mathematics \& Statistics, Victoria University of Wellington, PO Box 600, Wellington 6140, New Zealand}
        \email{peter.donelan@vuw.ac.nz}

\thanks{This work was supported in part by a Victoria University of Wellington Master's Scholarship}

\maketitle

\begin{abstract}

Polynomial invariants for robot manipulators and their joints arise from the adjoint action of the Euclidean group on its Lie algebra, the space of infinitesimal twists or screws.  The aim of this paper is to determine basic sets of generating polynomials for multiple screws.  Techniques from the theory of SAGBI bases are introduced. As a result,  a complete description is provided of the polynomial invariants for screw pairs and some results for screw triples are obtained.  The invariants are shown to be related to Denavit--Hartenberg parameters.
\end{abstract}

\keywords{Euclidean group, multi-screw, polynomial invariant, SAGBI basis}

\subjclass{13P10, 17B45, 70B10}
\section{Introduction}
\label{s:intro}
The infinitesimal motion of a rigid body in space is classically described by a screw, generically a helical motion about an axis somewhere in space, special cases being pure rotation (not translation along the axis) and pure translation.  There is a natural equivalence between screws arising from the action of the Euclidean group and a fundamental characteristic of a screw is its {\em pitch}, $p$. This is the ratio of two degree~2 homogeneous polynomials in the Pl\"ucker coordinates $(\bgw,\bv)$, the Klein form $\bgw.\bv$ and the Killing form $\bgw.\bgw$. These forms are themselves invariants of the adjoint action of the Euclidean group.  Moreover, they generate all such invariant polynomials, so are fundamental.  

A serial mechanism or manipulator (SM) consists of a sequence of rigid bodies connected by 1~degree-of-freedom (dof) joints that are, in principle any of revolute (R), prismatic (P) or helical (H). In practice, H joints are rarely used~\cite{hunt}.  Each joint, in a given configuration of the SM can be identified by a screw $X$, where $X$ is type R when $p=0$ (pure rotation) and type $P$ when $p=\infty$, ie $\bgw=0$ (pure translation). Type H corresponds to finite non-zero $p$. Therefore a crude classification of SMs simply requires identification of the sequence of joints.  However, that in itself is not sufficient to reconstruct an SM; one must know additional design parameters that describe the relative placement of the joints in successive components. This is typically done by means of Denavit--Hartenberg (DH) parameters~\cite{den}.  Denavit and Hartenberg introduced a matrix product notation for representing the motion arising from SMs.  Brockett subsequently showed how to rewrite this, in a purer form,  as a product of exponentials~\cite{broc}.  

Nevertheless, although DH parameters are valuable for describing SMs they do not have a clear theoretical basis.  
A mathematically more natural approach to classifying and identifying SMs is in terms of fundamental invariants for the screw sequences that appear in the product-of-exponentials form.  Towards this end, one would like to identify invariants for pairs, triples and general multiples of screws.  These are generally referred to as {\em vector invariants}.

Invariant polynomials of a group action form a subring in the ring of polynomials, where the variables are coordinates for the space acted upon.  One hopes to find generators for the invariant ring---a set of polynomials in terms of which every other invariant can be written.  It is known that for a large class of groups, the {\em reductive groups}, the invariant ring is finitely generated.  On the other hand there are non-reductive counter-examples to finite generation.  For a given group action, the First Fundamental Theorem of Invariant Theory asserts the polynomial invariants are finitely generated and gives a list of generators.  The list may not be algebraically independent: relations between the generators are called {\em syzygies} and the Second Fundamental Theorem (when it holds) asserts that the syzygies themselves are finitely generated. From this it is possible to obtain a reasonably precise description of the space of orbits of the action as an algebraic variety.  Further, a theorem proved by Hochster and Roberts asserts that the ring of invariants has a property known as Cohen--Macaulay~\cite{hoch}. This entails that every invariant polynomial can be written in the form
\[
f=\sum_{i=1}^t f_i(\th_1,\dots,\th_n)\eta_i,
\]
where $\th_1,\dots,\th_n$ is a set of {\em primary} invariant polynomials, $\eta_1,\dots,\eta_t$ a set of {\em secondary} invariants with $\eta_1=1$ and $f_1,\dots,f_n$ themselves polynomials. 

The Euclidean group is however non-reductive; nevertheless, its structure as a semi-direct product over a the reductive group $SO(3)$ does provide some hope that its invariants are finitely generated.   Known results mostly concern the standard action of the Euclidean group~(\ref{e:std}).  Weyl describes the vector invariant theory for the special orthogonal groups~\cite{weyl}. The fundamental theorems for the standard action of the Euclidean group are proved in~\cite{dalb}. Panyushev uses deep results from invariant theory and algebraic geometry to establish generators for invariants of semi-direct products~\cite{pan}, including the Euclidean group, in special cases and these encompass the case of screw pairs. Selig~\cite{sel} establishes a number of invariants for screw systems, that is subspaces of Lie algebra. In fact, his approach uses a basis of twists for a screw system and hence the resulting invariants coincide with those for multi-screws.  Connections with the classical theory of invariants and line geometry are established in~\cite{syz} and form a branch of this broad area of study.

In this paper, a more computational approach is employed, which it may be possible to extend to multi-screws. 
Computational results in invariant theory originate with Young's straightening law~\cite{you}, a procedure for reducing an invariant to a normal form.  More recently, Gr\"obner bases have provided a more general approach for computation in polynomial ideals~\cite{cox}.  As the invariant polynomials form a subalgebra rather than an ideal, the analogous SAGBI (or canonical subalgebra) basis theory is relevant~\cite{kap,robb}.  SAGBI bases provide algorithms for testing whether a given polynomial is invariant and of reducing it in terms of the basis.  Using these methods a generating set for the polynomial invariants on screw pairs is obtained, together with a list of invariants for screw triples and a conjecture regarding a generating set for screw triples.  

A longer term goal of this research is to obtain a complete understanding of the polynomial invariants  for multiple screws of any number, both as a set and in sequence in the way they occur in a serial manipulator.  The latter problem involves the additional subtlety that the multi-screw changes as the manipulator moves through different configurations and, moreover, since the manipulator may have singular configurations, these may become linearly dependent so that even the dimension of the screw system is not invariant.  In these cases there is an additional action of copies of the real numbers that propagate through the product of exponentials.  

In Section~\ref{s:adjoint}, the Euclidean group is defined together with its adjoint action on its Lie algebra, the space of twists or screws.  A brief introduction to polynomial invariants (Section~\ref{s:invariant}) and SAGBI bases (Section~\ref{s:sagbi}) follows.  The techniques are used in the computation of invariants and theorems on generating sets are established in Section~\ref{s:comp}. The connection with Denavit--Hartenberg parameters is described in Section~\ref{s:dhinv}.

\section{Adjoint action of the Euclidean group}
\label{s:adjoint}
The displacement of a rigid body in ordinary Euclidean 3-dimensional space $E^3$ is described by an element of the special Euclidean group $SE(3)$, a 6-dimensional Lie group.  Given a choice of origin and orthonormal coordinates, $E^3$ may be regarded as the vector space $\bbR^3$ with the Euclidean inner product.  With respect to these coordinates, a displacement in $SE(3)$ can be described by means of a combination of rotation about the origin, represented by a $3\x3$ orientation-preserving orthogonal matrix $R\in SO(3)$ (the special orthogonal group) and a translation $\br\in\bbR^3$.  In this form, composition of displacements is not by direct product of the rotation and translation subgroups, rather it is a semi-direct product $SE(3)\cong SO(3)\ltimes\bbR^3$, with composition:
\[
(R_2,\br_1)\cdot(R_1,\br_1)=(R_2R_1,R_2\br_1+\br_2),
\]
and only the translations form a normal subgroup. 
A displacement $A=(R,\br)$ acts on a point  $\bx=(x_1,x_2,x_3)^t\in\bbR^3$ by 
\begin{equation}
\label{e:std}
(R,\br).\bx=R\bx+\br.
\end{equation}
The motion of a rigid body is a path $(R(t),\br(t))$ where $t\in\bbR$ denotes a time parameter. Assuming the path to be differentiable and that $(R(0),\br(0))=(I,\b0)$, the identity displacement in $SO(3)\ltimes\bbR^3$, then the derivative $\bs=(\dot{R}(0),\dot{\br}(0))=(\gW,\bv)$ belongs to the tangent space to $SE(3)$ at the identity, that is its Lie algebra $\Lse(3)$. Elements of $\Lse(3)$ are called {\em twists} and are closely related to the one-dimensional subspaces spanned by non-zero twists, called {\em screws}. In given coordinates,  $\Lse(3)$ inherits a semi-direct sum structure, $\Lso(3)\oplus\Lt(3)$, where $\gW\in\Lso(3)$ is a $3\x3$ skew-symmetric matrix 
\[
\gW=\begin{pmatrix}0&-\gw_3&\gw_2\\ \gw_3&0&-\gw_1\\-\gw_2&\gw_1&0\end{pmatrix}
\]
which it is  convenient to identify with the 3-vector $\bgw=(\gw_1,\gw_2,\gw_3)^t$. The infinitesimal translations $\Lt^3$ can be written simply as 3-vectors $\bv\in\bbR^3$.  The components of the 6-vector $(\bgw^t,\bv^t)^t$ are referred to as the {\em Pl\"ucker coordinates} of a twist.  For convenience these will be denoted $(\bgw,\bv)$. 

A change of coordinates in $E^3$ can be represented by a transformation $T\in SO(3)\ltimes\bbR^3$ and a Euclidean displacement given by $A$ transforms under conjugation to $TAT^{-1}$.  Differentiating the conjugation of a path $A(t)$ through the identity gives rise to the {\em adjoint action} of $SE(3)$ on its Lie algebra $\Lse(3)$:
\begin{equation}
\label{e:adj}
(R,\br).(\gW,\bv)=(R\gW R^{-1}, R\gW R^{-1}\br+R\bv).
\end{equation}
Since the Lie algebra is a 6-dimensional vector space, a better way of writing the adjoint action is in terms of the Pl\"ucker coordinates, in which case
\[
(R,\br),(\bgw,\bv)=(R\bgw,R\bgw\x \br+R\bv)
\]
or better still, by replacing $\br=(t_1,t_2,t_3)^t$ by the skew-symmetric matrix $T=\begin{pmatrix} 0&-t_3&t_2\\t_3&0&-t_1\\-t_2&t_1&0\end{pmatrix}$, the adjoint action is given by the $6\x6$ representation~\cite{sel}:
\begin{equation}
\label{e:adjrep}
 \begin{pmatrix}
  R&0\\
  TR&R\\
 \end{pmatrix}
\begin{pmatrix}
 \bgw\\
 \textbf{v}\\
\end{pmatrix}=
\begin{pmatrix}
 R\bgw\\
 TR\bgw + R\textbf{v}\\
\end{pmatrix}
\end{equation}
One can clearly extend the adjoint action to a vector of screws or {\em multi-screw} $((\gw_1,\bv_1),\dots,(\bgw_k,\bv_k))$.  It is worth noting the two subactions associated with the rotation and translation subgroups.  These are represented by matrices of the form
\begin{equation}
\label{e:subaction}
\begin{pmatrix}
  R&0\\
  0&R\\
 \end{pmatrix}\quad\mbox{and}\quad
 \begin{pmatrix}
  I&0\\
  T&I\\
 \end{pmatrix},
\end{equation}
respectively, where $I$ is $3\x3$ the identity matrix. In particular, the rotation action is the double of the standard action of $SO(3)$ so that its action on a $k$-screw corresponds to the $2k$-vector action of the special orthogonal group, whose invariant theory is  fully described in~\cite{weyl,rich}.

\section{Invariant polynomials}
\label{s:invariant}
Given the action of a group $G$ on an affine space $K^n$ (or a variety contained in it) there is an induced action of the group  on the ring of polynomials  $K[x_1,\dots,x_n]$ given by $(Af)\bx)=f(A.\bx)$, and the {\em invariant polynomials} are those for which $Af=f$. The basic theorems for the classical groups, where both First and Second Fundamental Theorems hold, are established in~\cite{weyl}. In particular, the vector invariants of $SO(3)$ acting on a set of $m$ vectors $\bx_1,\dots,\bx_m\in\bbR^3$ are generated by the following polynomials:
\begin{equation}
\label{e:so3gens}
\begin{split}
&\bx_i.\bx_j, \quad1\leq i,j\leq m; \\
&[\bx_{i},\bx_{j},\bx_{k}],\quad 1\leq i<j<k\leq m,
\end{split}
\end{equation}
where the bracket denotes the determinant of the $3\x3$ matrix whose columns are the vectors and is defined only when $m\geq3$. Further, there are three families of relations generating the syzygies, but again these only arise when $m\geq 3$. In particular, let $X$ denote the $m\x m$ matrix whose $(i,j)^{th}$ entry is the scalar product $\bx_i\cdot\bx_j$.  Denote its $k\x k$ minors by: 
\begin{equation}
\label{e:minors}
f^{i_1,\ldots,i_{k}}_{j_1,\ldots,j_{k}} := \begin{vmatrix} \bx_{i_1}\cdot\bx_{j_1}&\ldots&\bx_{i_1}\cdot\bx_{j_{k}}\\ \vdots&\ddots&\vdots\\ \bx_{i_{k}}\cdot\bx_{j_1}&\ldots&\bx_{i_{k}}\cdot\bx_{j_{k}}\\ \end{vmatrix}
\end{equation}
Then for $m\geq 4$, the relations $f^{i_1,\ldots,i_{4}}_{j_1,\ldots,j_{4}} =0$ form one family of syzygies.

Taking the Pl\"ucker coordinates as variables on the Lie algebra, there is an induced action of the Euclidean group $SE(3)$ on the ring of polynomials  $\bbR[\bgw,\bv]=\bbR[\gw_1,\gw_2,\gw_3,v_1,v_2,v_3]$, It is well known that the ring of such polynomials, denoted $\bbR[\bgw,\bv]^{SE(3)}$, contains the Klein and Killing forms, $\bgw.\bv$ and $\|\bgw\|^2=\bgw.\bgw$, respectively.  It is, in fact generated by these, that is to say, there is an isomorphism:
\begin{equation}
\label{e:eucgen}
\bbR[\bgw,\bv]^{SE(3)}\cong\bbR[\bgw.\bv,\bgw.\bgw],
\end{equation}
and there are no relations between these invariants.  The proof in~\cite{don}, where the general case for $SE(n)$ is proved, makes use of reduction to a maximal torus and then careful analysis of how the invariants there lift back to the whole group.  

Another approach is found in~\cite{pan} where a procedure is introduced to find explicit generators for the adjoint action of a semi-direct product $G\ltimes V$, where $G$ is a group whose adjoint action is isomorphic to its action on $V$. This is the case for $G=SO(3)$ with $V=\bbR^3$. It is shown that if  $K[V]^G=K[f_1,\dots, f_k]$, there are no non-trivial syzygies and certain other technical conditions hold, then $K[V\x V]^{G\ltimes V}$ is (freely) generated by $f_1,\dots,f_k$ together with $d(f_i)_\bx.\by$ where $\bx,\by$ denote coordinates on the two copies of $V$ respectively and $df_\bx$ is the derivative vector of polynomials of $f$. Moreover, when the hypotheses hold, the invariants of the ``translation" subgroup $1\ltimes V$ are given as the coordinates of the first factor $\bx$ together with the new invariants above. These results are re-established below by a different method in the cases $m=1,2$ for $SE(3)=SO(3)\ltimes\bbR^3$.

\section{SAGBI bases}
\label{s:sagbi}
Computations in an ideal $I$ of a polynomial ring $K[x_1,\dots,x_n]$ (for some field of coefficients $K$) are effectively done using a Gr\"obner basis $G$. Given an ordering of the  terms in the variables (that satisfies certain natural properties---see for example~\cite{cox}), $G$ is a  generating sets for $I$ such that every polynomial in $I$ has leading term divisible by an element of $G$. Using a Gr\"obner basis provides an effective way of rewriting any element of $I$ in terms of the generators, i.e. $\sum_i u_ig_i$, where $u_i\in K[x_1,\dots,x_n]$, $g_i\in G$. However, the invariant polynomials form a closed set only under addition and multiplication by each other, not by general polynomials in the ring, so they form only a subalgebra, not an ideal.  The theory of SAGBI bases \cite{kap,robb} is an analogue for subalgebras $A$ instead of ideals.  

\begin{definition}
\label{d:sagbi}
$S$ is a {\em SAGBI basis} for a subalgebra $A$, with respect to a given term ordering, if the algebra of leading terms of polynomials in $S$ generates the subalgebra of all leading terms for $A$, that is, if $f\in A$ then its leading monomial  is a product of leading monomials of elements in $S$. 
\end{definition} 

The acronym SAGBI stands for ``subalgebra analogue of Gr\"obner bases for ideals".  There is a procedure, similar to Buchberger's algorithm for Gr\"obner bases, for building a SAGBI basis from a given generating set.  This uses a SAGBI reduction process, termed {\em subduction},the subalgebra version of the Division Algorithm, and analogues of $S$-polynomials, termed {\em {\tete}s}.  However,  given that subalgebras are not necessarily finitely generated, in the sense of (\ref{e:eucgen}), the procedure is not genuinely algorithmic. Its output may be infinite and/or it may not terminate in a finite number of steps. Indeed, even when $A$ itself is know to be finitely generated, there may not exist a finite SAGBI basis with respect to some or all term orderings.  The reader is referred to~\cite{robb} for details of the algorithm and its shortcomings and to~\cite{brav} for a more recent discussion of the existence of finite bases. 

The application of the procedure to invariant subalgebras relies on the following construction~\cite{vasc}. Suppose $\gC$ is a group defined as an algebraic variety in some affine space $K^m$, that is, $\gC$ is defined by means of polynomial equations on $K^m$ that generate an ideal $I(G)\subseteq K[a_1,\c\dots,a_m]$. Let $\gC$ act on a variety  $X$ in the affine space $K^n$, whose defining equations determine the ideal $I(X)\subseteq K[x_1,\dots,x_n]$.  The action $\psi:\gC\x X\to X$ gives rise to polynomial functions $\psi^*(x_i)\in K[a_1,\dots,a_m,x_1,\dots,x_n]$, $i=1,\dots,m$, being the components of $\gc.\bx$ in $K^m$. It is convenient to write $y_i=\psi^*(x_i)$ and treat the polynomials when appropriate as variables of a polynomial ring in their own right. Then:
\begin{equation}
\label{e:elim}
\begin{split}
K[x_1,\dots,x_m]^\gC&=K[y_1,\ldots,y_n]\cap \frac{K[x_1,\ldots,x_n]}{I(X)}\\
       &\subseteq \frac{K[a_1,\ldots,a_m,x_1,\ldots,x_n]}{I(G)+I(X)}
\end{split}
\end{equation}
This translates to a statement about SAGBI bases.  A term ordering in which any term that involves one or more of a given subset of variables is if higher order than any that contain none of these variables is called an {\em elimination order}.  A pure lexicographic order certainly satisfies this condition for any leading subset of the variables. The following theorem is proved by Stillman and Tsai~\cite{stil}.

\begin{theorem}
\label{t:sagbi}
With the notation above, let $\{f_1,\ldots,f_r\}$ be a SAGBI basis for $K[y_1,\ldots,y_n]$ with respect to an elimination order, in which any polynomial involving $a_i$, $1\leq i \leq m$ exceeds those in $x_j$, $1\leq j\leq n$, alone. Then $\{f_1,\ldots,f_r\}\cap K[x_1,\ldots,x_n]/I(X)$ is a SAGBI basis for $K[X]^G$, and hence a generating set.
\end{theorem}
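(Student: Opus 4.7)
The plan is to exploit the elimination order so that the SAGBI property, when restricted to monomials free of the group parameters $a_1,\dots,a_m$, transfers directly onto the subalgebra of invariants. The starting point is the characterisation (\ref{e:elim}), which identifies $K[X]^G$ with the intersection $K[y_1,\dots,y_n]\cap K[x_1,\dots,x_n]/I(X)$ inside the ambient ring $K[a_1,\dots,a_m,x_1,\dots,x_n]/(I(G)+I(X))$. Thus every invariant $g$ is simultaneously a polynomial in the $y_i$ and a polynomial in the $x_j$ alone (modulo $I(X)$), and Definition~\ref{d:sagbi} applied in $K[y_1,\dots,y_n]$ can be pulled back to say something about such a $g$.

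First I would take an arbitrary $g\in K[X]^G$ and, using (\ref{e:elim}), express it as a polynomial $h(y_1,\dots,y_n)\in K[y_1,\dots,y_n]$. By the SAGBI hypothesis, the leading monomial of $g$ in the ambient polynomial ring is a product of leading monomials of the $f_i$, say $\mathrm{in}(g)=\mathrm{in}(f_{i_1})^{e_1}\cdots\mathrm{in}(f_{i_s})^{e_s}$. The crucial observation is that $g\in K[x_1,\dots,x_n]/I(X)$ has leading monomial involving no $a_i$; since in the chosen elimination order every monomial with some $a_i$ dominates every monomial without, each factor $\mathrm{in}(f_{i_j})$ is also free of the $a_i$, and then the elimination property forces the entire polynomial $f_{i_j}$ to lie in $K[x_1,\dots,x_n]$. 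Combined with $f_{i_j}\in K[y_1,\dots,y_n]$, equation (\ref{e:elim}) places each such $f_{i_j}$ in $K[X]^G$, i.e.\ in the claimed intersection $\{f_1,\dots,f_r\}\cap K[x_1,\dots,x_n]/I(X)$.

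Next I would run the subduction process: form $g'=g-c\,f_{i_1}^{e_1}\cdots f_{i_s}^{e_s}$ for the appropriate coefficient $c\in K$ so that the leading terms cancel. Because the subtracted polynomial is itself a product of invariants, $g'\in K[X]^G$ and it has strictly smaller leading monomial than $g$ under the chosen term order. Iterating, and invoking the fact that the term order is a well-ordering on monomials, the process terminates after finitely many steps, giving an expression for the leading monomial of $g$ as a product of leading monomials of elements of the intersection, as required by Definition~\ref{d:sagbi}. Since a SAGBI basis is in particular a generating set for the subalgebra, the second conclusion follows at once.

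The routine bookkeeping (tracking leading coefficients and invoking termination of subduction) is standard; the one genuinely delicate point is justifying that no factor $\mathrm{in}(f_{i_j})$ in the SAGBI decomposition of $\mathrm{in}(g)$ secretly carries an $a_i$ that cancels in the product. The elimination-order hypothesis is doing exactly this work: because any monomial containing some $a_i$ is larger than every monomial free of all of them, a product of leading monomials can be free of $a_i$'s only if each factor is, ruling out any such cancellation. This is the main obstacle to address, and once it is pinned down the rest of the argument is a direct translation of the standard Gr\"obner-basis intersection proof to the subalgebra setting of~\cite{kap,robb}.
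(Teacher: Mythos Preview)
The paper does not supply its own proof of this theorem: it is stated as a result of Stillman and Tsai~\cite{stil} and simply cited. So there is nothing in the paper to compare your argument against.

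On its own merits, your proposal is the standard elimination argument and is essentially correct. The key step you isolate---that a product of leading monomials lying in $K[x_1,\dots,x_n]$ forces each factor to lie there as well---is valid because monomial exponents in a polynomial ring are non-negative, so no cancellation of the $a_i$ can occur; and then the elimination order pushes the whole of each contributing $f_{i_j}$ into $K[x_1,\dots,x_n]$. Two small remarks. First, the subduction loop you run at the end is not needed to verify the SAGBI property itself (Definition~\ref{d:sagbi} concerns only leading monomials); it is what justifies the final clause ``hence a generating set'', and indeed that is the standard way to pass from a SAGBI basis to generation. Second, in full generality the identification~(\ref{e:elim}) takes place inside the quotient $K[a,x]/(I(G)+I(X))$, so the representative of $g$ coming from $K[y_1,\dots,y_n]$ and the one coming from $K[x_1,\dots,x_n]$ may differ by an element of $I(G)+I(X)$; your argument tacitly treats them as equal in $K[a,x]$. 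In the applications the paper actually makes (translation subgroup acting on an affine space) both ideals vanish and the issue is moot, but a fully general statement along the lines of~\cite{stil} requires a word about this.
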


\section{Computing Invariants}
\label{s:comp}
Richman~\cite{rich} proves the general $n$-dimensional case of the following result for $SO(3)$:
\begin{theorem}
\label{t:sosagbi}
A SAGBI basis for the invariants of $SO(3)$, under lexicographic ordering $x_{11} > x_{12}>\ldots >  \ldots > x_{m3}$ is given by the minors $f^{i_1,\ldots,i_k}_{j_1,\ldots,j_k}$ for $k=1,2$, together with the determinants $|\textbf{x}_{i_1}\ldots\textbf{x}_{i_3}|$ for all $1\leq i_1 < i_2 < i_3 \leq m$.
\end{theorem}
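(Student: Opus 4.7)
The plan is to apply Theorem~\ref{t:sagbi} (Stillman--Tsai). I would embed $SO(3)$ as an affine variety in $K^9$ with coordinates $a_{rs}$ cut out by the orthogonality relations $A^T A = I$ and $\det A = 1$, generating an ideal $I(SO(3))$. The induced action on $K[x_{ij}]$ is $x_{ij}\mapsto y_{ij}=\sum_k a_{jk}x_{ik}$. Choosing an elimination term order with all $a_{rs}$ exceeding all $x_{ij}$ and the given lex order $x_{11}>x_{12}>\cdots>x_{m3}$ on the $x$-variables, the theorem reduces the problem to finding a SAGBI basis for $K[y_{ij}]\subseteq K[a_{rs},x_{ij}]/I(SO(3))$ and then intersecting with $K[x_{ij}]$.

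Next I would compute leading monomials of the proposed generators under the lex order on $K[x_{ij}]$. For dot products, $L(\bx_i\cdot\bx_j)=x_{i1}x_{j1}$ whenever $i\le j$. Row-wise expansion of the determinant gives $L([\bx_i,\bx_j,\bx_k])=x_{i1}x_{j2}x_{k3}$ for $i<j<k$. For the $2\times 2$ minor $f^{ij}_{kl}$ I would invoke the Cauchy--Binet expansion
\[
f^{ij}_{kl}=\sum_{a<b}(x_{ia}x_{jb}-x_{ib}x_{ja})(x_{ka}x_{lb}-x_{kb}x_{la}),
\]
in which the apparent leading term $x_{i1}x_{j1}x_{k1}x_{l1}$ from each of $(\bx_i\cdot\bx_k)(\bx_j\cdot\bx_l)$ and $(\bx_i\cdot\bx_l)(\bx_j\cdot\bx_k)$ cancels, leaving $L(f^{ij}_{kl})=x_{i1}x_{k1}x_{j2}x_{l2}$ from the $(a,b)=(1,2)$ summand.

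I would then verify the SAGBI property directly. By the First Fundamental Theorem for $SO(3)$ recalled in~(\ref{e:so3gens}), every invariant is a polynomial in the dot and triple products, so it suffices to understand how their products contribute leading terms. The leading term of a pure product of dot products lies in the subring $\bbR[x_{11},\ldots,x_{m1}]$, whereas each triple-product factor contributes exactly one variable from column~$2$ and one from column~$3$. When two distinct products of generators share a leading term, their difference is a canonical SAGBI {\tete} whose surviving dominant term is captured by an appropriate $2\times 2$ minor $f^{ij}_{kl}$---already on the proposed list.

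The main obstacle is closing off the SAGBI completion: one must check that no further {\tete}s---among the minors, among the triple products, or of mixed type---produce a new leading monomial. Two Cauchy--Binet identities do the heavy lifting. First, $f^{ijk}_{lmn}=[\bx_i,\bx_j,\bx_k]\,[\bx_l,\bx_m,\bx_n]$, since in three columns the only three-element index subset is the whole set, so $3\times 3$ minors reduce to products of triple products. Second, $f^{I}_{J}=0$ for $|I|\ge 4$, as the Gram matrix of three-dimensional vectors has rank at most~$3$. With these, the remaining {\tete} reductions amount to a finite collection of Pl\"ucker-type relations among determinants, which is the combinatorial core of Richman's general-$n$ argument in~\cite{rich}.
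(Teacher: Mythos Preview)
The paper does not prove this theorem: it is stated with attribution to Richman~\cite{rich}, who establishes the general $SO(n)$ case, and no argument is given in the text. There is therefore no ``paper's own proof'' to compare against beyond that citation.

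As for your proposal, it contains a structural detour and a genuine gap. The opening paragraph sets up Theorem~\ref{t:sagbi} (Stillman--Tsai), which would require you to produce a SAGBI basis for $K[y_{ij}]$ inside the quotient $K[a_{rs},x_{ij}]/I(SO(3))$ under an elimination order and then intersect with $K[x_{ij}]$. You never do this; from the second paragraph on you work entirely in $K[x_{ij}]$, compute leading monomials there, and invoke the First Fundamental Theorem~(\ref{e:so3gens}) to reduce to a direct {\tete} check. That is a legitimate strategy---indeed it is essentially Richman's---but it means the Stillman--Tsai apparatus is unused and should be dropped.

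The substantive gap is in the closure step. Computing the leading monomials of $\bx_i\cdot\bx_j$, $[\bx_i,\bx_j,\bx_k]$ and $f^{ij}_{kl}$ is routine (though your formula $L(f^{ij}_{kl})=x_{i1}x_{k1}x_{j2}x_{l2}$ tacitly assumes a particular ordering of $i,j,k,l$ that you should make explicit). The real content of the theorem is that \emph{every} {\tete} among these generators subducts to zero, and you yourself flag this as ``the main obstacle'' before deferring it to ``the combinatorial core of Richman's general-$n$ argument in~\cite{rich}''. That is circular: you are citing the theorem you set out to prove. A complete argument must classify the {\tete}s---products of dot products with equal leading monomials, mixed products involving triple products and minors---and exhibit explicit subductions using the Pl\"ucker and Cauchy--Binet relations you mention. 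Richman carries this out via a straightening/standard-tableaux analysis; your sketch identifies the right ingredients but does not perform the reduction.
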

In other words, in addition to the generators (\ref{e:so3gens}) the $2\x2$ minors of the matrix of scalar products are required. 

\subsection{Screw invariants}
As an example of the application of this theorem, consider the translation sub-action of the adjoint action of $SE(3)$ in (\ref{e:subaction}). Lexicographic ordering with $t_1 > t_2 > t_3 > \gw_1 > \gw_2 > \gw_3>v_1 > v_2 > v_3$, is used.  The first three variables relate to the group $G=\bbR^3$ and the last six to the Lie algebra $X=\Lse(3)$.  Since both of these are already affine spaces the ideals $I(G)$ and $I(X)$ are trivial. The components of the action are given, in decreasing term order, by:
\begin{equation}
 \begin{split}
y_1 &= \omega_1, \quad y_2 = \omega_2,\quad y_3 = \omega_3\\
y_4 &= t_2 \omega_3 - t_3 \omega_2 + v_1\\
y_5 &= -t_1 \omega_3 + t_3 \omega_1 + v_2\\
y_6 &= t_1 \omega_2 - t_2 \omega_1 + v_3\\
\end{split}
\end{equation}
Of course, $y_1,y_2,y_3$ are already invariants. Application of the SAGBI basis construction procedure yields one additional polynomial, $f_1=\omega_1 v_1 + \omega_2 v_2 + \omega_3 v_3=\bgw.\bv$. Further details of the computation can be found in~\cite{croo}.  Hence, by Theorem~\ref{t:sagbi}, a SAGBI basis for $\bbR[\gw_1, \gw_2, \gw_3,v_1, v_2, v_3]^{\bbR^3}$, the invariant polynomial subalgebra,  is $\{\gw_1,\gw_2,\gw_3,\bgw.\bv\}$.  This can be used to provide an alternative derivation of the finite generation theorem for the adjoint action of $SE(3)$. 

\begin{theorem}
\label{t:screwinv}
The invariant subring $\bbR[\omega_1,\ldots,v_3]$ for the adjoint action of $SE(3)$  has a SAGBI basis given by $\{\boldsymbol{\omega}\cdot\boldsymbol{\omega},\boldsymbol{\omega}\cdot\textbf{v}\}$.
\end{theorem}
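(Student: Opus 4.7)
The strategy combines the semi-direct structure $SE(3)\cong SO(3)\ltimes\bbR^3$ with the SAGBI computation for the translation sub-action carried out immediately before the theorem. Because $\bbR^3$ is the normal factor, one has $\bbR[\bgw,\bv]^{SE(3)}=(\bbR[\bgw,\bv]^{\bbR^3})^{SO(3)}$, and the preceding paragraph already established $\bbR[\bgw,\bv]^{\bbR^3}=\bbR[\gw_1,\gw_2,\gw_3,\bgw\cdot\bv]$, a polynomial ring on four algebraically independent generators.

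For the second stage, the residual $SO(3)$ action on this subring rotates $(\gw_1,\gw_2,\gw_3)$ in the standard way---this is the rotation sub-action in (\ref{e:subaction}), under which $\bgw\mapsto R\bgw$---and fixes $\bgw\cdot\bv$, since $R\bgw\cdot R\bv=\bgw\cdot\bv$. Treating $\bbR[\gw_1,\gw_2,\gw_3,\bgw\cdot\bv]$ as a polynomial ring in four variables with the fourth held fixed, Theorem~\ref{t:sosagbi} in the single-vector case $m=1$ supplies $\bgw\cdot\bgw=\gw_1^2+\gw_2^2+\gw_3^2$ as a SAGBI basis for $\bbR[\gw_1,\gw_2,\gw_3]^{SO(3)}$. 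Adjoining the fixed generator produces the candidate basis $\{\bgw\cdot\bgw,\bgw\cdot\bv\}$.

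The remaining task is to verify the SAGBI condition in the ambient ring $\bbR[\bgw,\bv]$ under the prescribed lex order $\gw_1>\gw_2>\gw_3>v_1>v_2>v_3$. With this order, $\mathrm{LM}(\bgw\cdot\bgw)=\gw_1^2$ and $\mathrm{LM}(\bgw\cdot\bv)=\gw_1 v_1$, and these two monomials are algebraically independent: from the exponents $(2a+b,b)$ of $\gw_1^{2a+b}v_1^b$ one recovers $(a,b)$ uniquely. Combined with the algebraic independence of the two generators themselves (encoded in (\ref{e:eucgen})), this forces the expansion $f=\sum c_{a,b}(\bgw\cdot\bgw)^a(\bgw\cdot\bv)^b$ of any invariant to have a unique dominant pair $(a^*,b^*)$; the leading monomial of $f$ is then $(\gw_1^2)^{a^*}(\gw_1v_1)^{b^*}$, which lies in the monoid generated by $\mathrm{LM}(\bgw\cdot\bgw)$ and $\mathrm{LM}(\bgw\cdot\bv)$. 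This is precisely the SAGBI property.

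The step I expect to require the most care is the bridge between the two stages: one must justify that a SAGBI basis for the reduced $SO(3)$ action on the four-variable subring pulls back to a SAGBI basis in the six-variable ambient ring under a compatible term order. The leading-monomial argument in the previous paragraph is what sidesteps this subtlety, by verifying the basis condition directly in $\bbR[\bgw,\bv]$. A more pedestrian alternative would be to apply Theorem~\ref{t:sagbi} to $SE(3)$ acting on $\Lse(3)$ with an elimination order on the rotation-matrix and translation parameters; that would also reach the same conclusion but would require handling the defining ideal $I(SO(3))$ and running an explicit subduction on all six action polynomials, which the staged approach avoids.
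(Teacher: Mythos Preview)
Your proof is correct and follows essentially the same two-stage strategy as the paper: first reduce to translation invariants $\bbR[\gw_1,\gw_2,\gw_3,\bgw\cdot\bv]$, then take $SO(3)$-invariants using Theorem~\ref{t:sosagbi} with $m=1$, and finally check the SAGBI property via the leading monomials. The paper compresses your leading-monomial argument into the single remark that ``there are no cancellations among leading terms of these generators,'' whereas you spell out the injectivity of $(a,b)\mapsto(2a+b,b)$ explicitly; and the paper makes the coefficient-matching step $Ag_b=g_b$ explicit where you fold it into the phrase ``with the fourth held fixed''---but these are differences of exposition, not of method.
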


\begin{proof}
Let $f$ be an $SE(3)$ invariant for the action on a single screw. As it is, in particular, a translational invariant, it can be written via the SAGBI basis in the form:
\begin{equation}
 f = \sum_{b} g_b.(\boldsymbol{\omega}\cdot\textbf{v})^b,
\end{equation}
where $g\in \bbR[\omega_1,\omega_2,\omega_3]$ and the sum is finite. Equally, $f$ must also be a rotational invariant. If $A = \begin{pmatrix} R\quad 0\\0\quad R\\ \end{pmatrix}\in SE(3)$, where $R\in SO(3)$, then $A(\boldsymbol{\omega},\textbf{v})=R\bgw.R\bv=\bgw.\bv$. Hence
\begin{equation}
 Af = \sum_{b} (Ag_b)(\omega_1,\omega_2,\omega_3)(\boldsymbol{\omega}\cdot \textbf{v})^b= f.
\end{equation}
This entails $Ag_b = g_b$ for all $b$. Hence $g_b$ is an invariant of the standard action of $SO(3)$ on $\boldsymbol{\omega}$, for which a SAGBI basis is $\{\boldsymbol{\omega}\cdot\boldsymbol{\omega}\}$, by Theorem~\ref{t:sosagbi}.
Hence $g_b \in \bbR[\boldsymbol{\omega}\cdot\boldsymbol{\omega}]$ and so the $SE(3)$ invariants are generated by $\{\boldsymbol{\omega} \cdot \boldsymbol{\omega}, \boldsymbol{\omega} \cdot \textbf{v}\}$. Moreover, as there are no cancellations among leading terms of these generators it is also a SAGBI basis.
\end{proof}

Unfortunately, direct application of Theorem~\ref{t:sagbi} does not appear to yield a SAGBI basis for the rotational sub-action. On the other hand, there are already known SAGBI bases from Theorem~\ref{t:sosagbi}. 

\subsection{Screw pairs and triples}
The translation group approach may be used in the case of $2$-vector invariants of adjoint action of $SE(3)$. Consider a screw-pair $(\boldsymbol{\omega}_i,\textbf{v}_i)$,  with $\boldsymbol{\omega}_i=(\omega_{i1},\omega_{i2},\omega_{i3})$, $\bv_i=(v_{i1},v_{i2},v_{i3})$ for $i=1,2$.  We adopt  lexicographic order with
\begin{align*}
&t_1>t_2>t_3>\omega_{11}>\omega_{12}>\omega_{13}>\omega_{21}>\omega_{22}>\omega_{23}\\
&\kern6cm>v_{11}>v_{12}>v_{13}>v_{21}>v_{22}>v_{23}.
\end{align*}
Computation gives a SAGBI basis for the translation sub-action as follows:
\begin{align}
\label{sagbi2}
 &\omega_{in}, \;\quad\qquad1\leq i \leq 2,\quad 1\leq n \leq 3\\
 &\boldsymbol{\omega}_i\cdot \textbf{v}_i,  \qquad1\leq i \leq 2&\notag\\
 &\boldsymbol{\omega}_1\cdot \textbf{v}_2 + \boldsymbol{\omega}_2 \cdot \textbf{v}_1\notag\\
 &w_{11}(w_{22}v_{22}+w_{23}v_{23})\notag\\
 &\qquad-w_{21}(w_{12}v_{22}+w_{21}v_{23}+w_{21}v_{11}+w_{22}v_{12}+w_{23}v_{13})\notag
 \end{align}
The last translation invariant derives from the \tete
\begin{equation}
\label{last_sagbi}
\gw_{11}(\boldsymbol{\omega}_2\cdot \textbf{v}_2)-\gw_{21}(\boldsymbol{\omega}_1\cdot \textbf{v}_2 + \boldsymbol{\omega}_2 \cdot \textbf{v}_1)
\end{equation}
and, together with the $\omega_{in}$, is not a rotation invariant. That the set forms a SAGBI basis can be verified by a computer algebra system such as Macaulay2~\cite{macaulay}.

\begin{theorem}
\label{t:2screws}
The invariant ring of $SE(3)$ acting on two screws is generated by 
\begin{equation}
\label{e:2screwinv}
\{\boldsymbol{\omega}_i\cdot\boldsymbol{\omega}_j,\boldsymbol{\omega}_i\cdot \textbf{v}_i,\boldsymbol{\omega}_1\cdot \textbf{v}_2 + \boldsymbol{\omega}_2\cdot \textbf{v}_1\,|\,1\leq i,j \leq 2\}.
\end{equation}
\end{theorem}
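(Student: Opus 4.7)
The strategy parallels the proof of Theorem~\ref{t:screwinv}. Write $u_1=\bgw_1\cdot\bv_1$, $u_2=\bgw_2\cdot\bv_2$, $u_3=\bgw_1\cdot\bv_2+\bgw_2\cdot\bv_1$, and let $L$ denote the final element of the SAGBI basis (\ref{sagbi2}). Any $SE(3)$-invariant $f$ is in particular translation-invariant, so by Theorem~\ref{t:sagbi} one may write
\[
f = \sum_{b,c,d,e} g_{b,c,d,e}(\omega_{11},\ldots,\omega_{23})\, u_1^b u_2^c u_3^d L^e,
\]
with $g_{b,c,d,e}\in\bbR[\omega_{11},\ldots,\omega_{23}]$. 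Since each $u_k$ is already $SO(3)$-invariant, the task reduces to two claims: (i) $g_{b,c,d,e}=0$ whenever $e\ge1$, so $L$ never contributes to an $SE(3)$-invariant; (ii) each surviving $g_{b,c,d,0}(\omega)$ is $SO(3)$-invariant, hence lies in $\bbR[\bgw_i\cdot\bgw_j]$ by Theorem~\ref{t:sosagbi} (with only two vectors no triple product arises, so the SAGBI basis reduces to $1\times 1$ minors of the scalar-product matrix).

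Step (ii) is the direct analogue of the single-screw case: under $A=\text{diag}(R,R,R,R)\in SO(3)\subset SE(3)$ each $u_k$ is fixed, so once $L$-terms are absent the rotation invariance of $f$ transfers to each $g_{b,c,d,0}(\omega)$. Step (i), the elimination of $L$, is the substantive obstacle and the point where the argument diverges from Theorem~\ref{t:screwinv}. My plan is to proceed one $v$-degree at a time: $u_1,u_2,u_3,L$ are each linear in $\bv$, so $u_1^b u_2^c u_3^d L^e$ has $v$-degree $b+c+d+e$, and rotation preserves $v$-degree. At $v$-degree~$1$ the $SE(3)$-invariant part of $f$ takes the form $\bh_1(\omega)\cdot\bv_1+\bh_2(\omega)\cdot\bv_2$ with each $\bh_i$ an $SO(3)$-equivariant polynomial map from $(\bgw_1,\bgw_2)$-space to $\bbR^3$. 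By the classical equivariance counterpart to Weyl's theorem, such maps are generated over $\bbR[\bgw_j\cdot\bgw_k]$ by $\bgw_1,\bgw_2,\bgw_1\times\bgw_2$, so $\bh_i=p_i\bgw_1+q_i\bgw_2+r_i(\bgw_1\times\bgw_2)$. Imposing translation-invariance via $\bv_i\mapsto\bv_i+t\times\bgw_i$ and using $\bh_i\cdot(t\times\bgw_i)=t\cdot(\bgw_i\times\bh_i)$ yields $\bgw_1\times\bh_1+\bgw_2\times\bh_2=0$; expanding by the triple-product identity and invoking the linear independence of $\bgw_1,\bgw_2,\bgw_1\times\bgw_2$ for generic $\omega$ forces $r_1=r_2=0$ and $q_1=p_2$. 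Hence the $v$-linear $SE(3)$-invariants are precisely the $\bbR[\bgw_j\cdot\bgw_k]$-span of $u_1,u_2,u_3$, with no $L$ needed.

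The principal obstacle is extending this $L$-elimination to all higher $v$-degrees. My plan is to induct on $v$-degree: at $v$-degree $N$, a putative $L^e$ ($e\ge1$) contribution is reduced by SAGBI-style subduction against the six candidates in (\ref{e:2screwinv}) together with the lower-$v$-degree $SE(3)$-invariants already secured by induction. The key claim is that any $SE(3)$-invariant whose top-$v$-degree part escapes the ring generated by (\ref{e:2screwinv}) would, by repeating the equivariance-plus-translation argument at top-$v$-degree, produce a forbidden identity among $SO(3)$-equivariant polynomial maps into higher-tensor targets. Once $L$ is excluded at every $v$-degree, the surviving coefficients are $SO(3)$-invariant and Theorem~\ref{t:sosagbi} completes the argument. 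For independent verification, a Macaulay2 computation analogous to that used for (\ref{sagbi2}) could confirm directly that (\ref{e:2screwinv}) is a SAGBI basis, under a suitable refined order, for $\bbR[\bgw,\bv]^{SE(3)}$.
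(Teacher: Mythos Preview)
Your overall architecture---reduce to the translation SAGBI basis, then impose rotation invariance---matches the paper.  The divergence is in how you handle the extra SAGBI generator $L$.  You treat the elimination of $L$ as ``the substantive obstacle'' and propose an incomplete induction on $v$-degree via $SO(3)$-equivariant vector maps.  The paper's observation, which you have missed, is that this obstacle is illusory: equation~(\ref{last_sagbi}) says precisely that
\[
L=\omega_{11}\,u_2-\omega_{21}\,u_3,
\]
so $L$ already lies in the subalgebra generated by the $\omega_{in}$ together with $u_1,u_2,u_3$.  A SAGBI basis may properly contain a generating set; here $L$ is needed only to capture leading terms, not to generate the algebra.  Once you note this, every translation invariant can be written as $\sum_{a,b,c} g_{a,b,c}(\bgw_1,\bgw_2)\,u_1^{a}u_2^{b}u_3^{c}$ with no $L$ at all, and your Step~(ii) finishes the proof exactly as in Theorem~\ref{t:screwinv}.

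So the gap is not that your equivariance induction is wrong in principle, but that it is unnecessary and, as you acknowledge, not carried through.  The missing idea is simply to read~(\ref{last_sagbi}) as an algebraic identity rather than merely as the provenance of $L$ in the SAGBI construction.
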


\begin{proof}
This is  similar to that for Theorem~\ref{t:screwinv}. Any $SE(3)$ invariant $f(\boldsymbol{\omega}_1,\bv_1,\boldsymbol{\omega}_2,\bv_2)$ is also a translation invariant so, using the SAGBI basis, can be written in the form
\[
\sum_{a,b,c} g_{a,b,c}(\boldsymbol{\omega}_1,\boldsymbol{\omega}_2)f_1^af_2^bf_3^c,
\]
where $f_1,f_2,f_3$ are the middle three invariants in the SAGBI basis~(\ref{sagbi2}). This is possible because of~(\ref{last_sagbi}).
Since the $f_i$ are also rotational invariants, it follows that the coefficients $g_{a,b,c}$ must also be and the result follows from Theorem~\ref{t:sosagbi}.  We note that these are the generators of Panyushev~\cite{pan}, described in Section~\ref{s:invariant}, and of Selig~\cite{sel} relating to 2-systems. \end{proof}

In the case of screw triples $(\bgw_i,\bv_i)$, $i=1,2,3$, application of Theorem~\ref{t:sagbi} has yielded the translational invariants:
\begin{align}
 &\omega_{in}\qquad\qquad\quad\: &1\leq i\leq 3,\quad1\leq n\leq 3\notag\\
 &\boldsymbol{\omega}_i\cdot \textbf{v}_i \qquad\qquad &1\leq i \leq 3\notag\\
 &\boldsymbol{\omega}_i\cdot \textbf{v}_j + \boldsymbol{\omega}_j \cdot \textbf{v}_i&1\leq i<j\leq 3\notag\\
 &z_{123},\:\; z_{231},\:\; z_{312}&\notag\\
 &z_{121}-z_{323},\:\; z_{232}-z_{131},\:\; z_{313}-z_{212}\kern-3cm&
\end{align}
where $z_{ijk} = \begin{vmatrix} \omega_{1i}\quad \omega_{2i}\quad \omega_{3i}\\\omega_{1j}\quad \omega_{2j}\quad \omega_{3j}\\ v_{1k}\quad v_{2k}\quad v_{3k}\\ \end{vmatrix}$. However, owing to the length of the computation, it is not clear that the computation has terminated.  

In the case of $z_{iji}-z_{kjk}$, neither of the terms $z_{iji}$, $z_{kjk}$ alone is a translational invariant. Furthermore, none of the final six invariants is a rotational invariant, and so the method used for the cases $m=1,2$ to determine $SE(3)$ invariants (assuming that we have a full listing of translational invariants) will not work. By comparison with the known set of $SO(3)$ invariants, however, the $SE(3)$ invariant  $z_{123} + z_{231} + z_{312}$ can be identified. This is equal to the sum 
\[
[\bv_1,\bgw_2 ,\bgw_3] + [\bgw_1, \bv_2,\bgw_3]+[\bgw_1, \bgw_2, \bv_3] 
\]
of rotation invariants. We conjecture that the following invariants generate the ring of Euclidean invariants of three screws:
\begin{align}
\label{e:3screwinv}
&\boldsymbol{\omega}_i\cdot\boldsymbol{\omega}_j,&1\leq i,j \leq 3\notag\\
&\boldsymbol{\omega}_i\cdot \textbf{v}_i,&1\leq i \leq 3\notag\\
&\boldsymbol{\omega}_i\cdot \textbf{v}_j + \boldsymbol{\omega}_j\cdot \textbf{v}_i,&1\leq i<j \leq 3\notag\\
&[\bgw_1,\bgw_2 ,\bgw_3]\notag\\
&[\bv_1,\bgw_2 ,\bgw_3] + [\bgw_1, \bv_2,\bgw_3]+[\bgw_1, \bgw_2, \bv_3] \kern-2cm
\end{align}

\section{Denavit--Hartenberg Parameters}
\label{s:dhinv}
The DH parameters for a series of screws (or, more properly, twists so that there is a definitive direction to the axis corresponding to a positive angle of rotation) are as illustrated in Figure~\ref{f:dh}.
   \begin{figure}[thpb]
      \centering
      \includegraphics[scale=0.4]{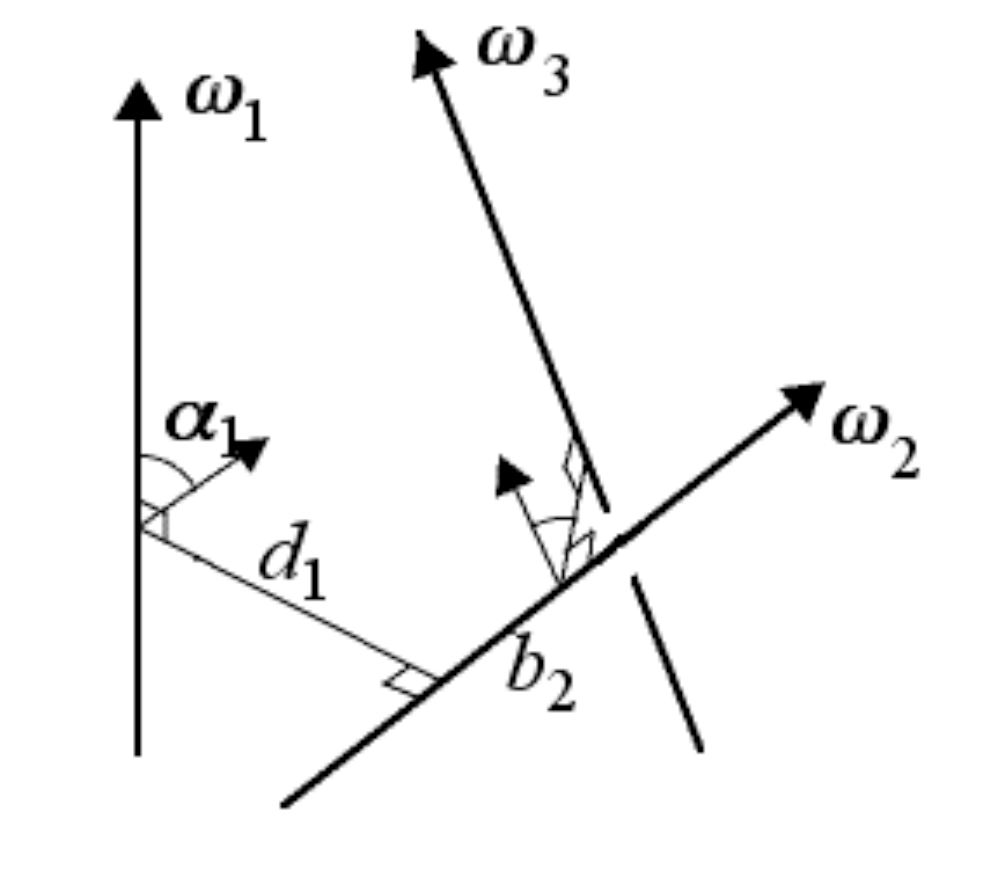}
      \caption{Denavit--Hartenberg parameters for a screw triple}
      \label{f:dh}
   \end{figure}
The parameters are the {\em twist angle} $\ga_i$ between the direction vectors of two consecutive screws $\bgw_i$, $\bgw_{i+1}$ , the {\em displacement} $d_i$ being the length of the common perpendicular between the screw axes and the {\em offset} $b_i$, which is only defined when there are three consecutive screws and is the distance along the middle one between the feet of the common perpendicular to the other two.  Note that if one is simply interested in three screws, not necessarily connected in any order, then there is an offset associated with every triple.  It is clear that the DH parameters are not polynomials in the Pl\"ucker coordinates. Nevertheless they are related, at least in the case of twist angle and displacement.  For a pair of screws $(\bgw_i,\bv_i)$, $i=1,2$ with twist angle $\ga$ and displacement $d$: 
\begin{equation}
\label{e:dhinv}
\begin{split}
\cos\ga&=\frac{\bgw_1\cdot\bgw_2}{\sqrt{(\bgw_1\cdot\bgw_1)(\bgw_2\cdot\bgw_2)}}\\
d\sin\ga&=\frac{\bgw_1\cdot\bv_2+\bgw_2\cdot\bv_1}{\sqrt{(\bgw_1\cdot\bgw_1)(\bgw_2\cdot\bgw_2)}}
\end{split}
\end{equation}
The expressions on the right of (\ref{e:dhinv}) are written in terms of the fundamental invariants for screw pairs in Theorem~\ref{t:2screws}.  A version of these formulae can be found, for example, in~\cite{mas}. 

In the case of the offset for a screw triple, it is certainly possible to obtain an expression in terms of the Pl\"ucker coordinates of the screws and to show that this expression is an $SE(3)$ invariant.  However, so far, it has not been possible to write this expression in terms of the conjectured fundamental generators for the invariant polynomials.  

\section{Conclusion}

Euclidean invariants have long played a fundamental role in rigid body kinematics.  The expression of quantities such as pitch and the DH parameters in terms of polynomial invariants demonstrates that knowledge of those particular invariants is equally important. From the mathematical point of view, there is a rich theory for polynomial invariants, both in the classical literature, where the central results are the Fundamental Theorems of Invariant Theory, and in the modern computational invariant theory via Gr\"obner and SAGBI bases.  It is surprising that relatively little has been known until recently about the adjoint invariants of the Euclidean group.  This paper has briefly surveyed earlier results and has introduced techniques of SAGBI bases to the search for more detailed knowledge of Euclidean invariants.  While these have been only partially successful, enumeration of SAGBI bases in some important cases suggest that it can be a powerful tool, leading to a deeper understanding of the classification of serial manipulators and computational techniques for manipulator algebra.

{\bf Acknowledgements}
The authors thank Dr J. M. Selig of London South Bank University, who first aroused their interest in invariants of screws, multi-screws and screw systems and who suggested the connection with Denavit--Hartenberg parameters. We also thank E.~Walker of Texas A\&M University for correcting an error in one of our SAGBI basis computations.


\end{document}